\newcommand{\seq}[1]{{\left\langle{#1}\right\rangle}}
\newcommand{\uhr}[1]{\! \upharpoonright_{#1}}
\newcommand{\w}{\omega}
\newcommand{\cat}{\widehat{\phantom{\alpha}}}
\renewcommand \phi {\varphi}
\newcommand{\QQ}{\mathbb{Q}}
\newcommand{\notion}{low for paths}
\renewcommand{\P}{\mathcal{P}}
\newcommand{\Q}{\mathcal{Q}}
\newcommand{\R}{\mathcal{R}}
\newcommand{\M}{\mathcal{M}}
\newcommand{\N}{\mathcal{N}}
\newcommand{\iso}[2]{\text{Iso}(#1, #2)}
\theoremstyle{plain}
\newtheorem{theorem}{Theorem}
\newtheorem{lemma}[theorem]{Lemma}
\newcounter{claimCounter}[theorem]
\newtheorem{claim}[claimCounter]{Claim}
\theoremstyle{definition}
\newtheorem{definition}[theorem]{Definition}
\begin{document}

\title{Taking the path computably travelled}
\author[Franklin]{Johanna N.Y.\ Franklin}
\address{Department of Mathematics \\ Room 306, Roosevelt Hall \\ Hofstra University \\ Hempstead, NY 11549-0114 \\ USA}
\email{johanna.n.franklin@hofstra.edu}
\thanks{The first author was supported in part by Simons Foundation Collaboration Grant \#420806.}

\author[Turetsky]{Dan Turetsky}
\address{Department of Mathematics \\ Victoria University of Wellington \\ Wellington, New Zealand}
\email{dan.turetsky@vuw.ac.nz}

\date{\today}

\begin{abstract}
We define a real $A$ to be low for paths in Baire space (or Cantor space) if every $\Pi^0_1$ class with an $A$-computable element has a computable element. We prove that lowness for paths in Baire space and lowness for paths in Cantor space are equivalent and, furthermore, that these notions are also equivalent to lowness for isomorphism.
\end{abstract}

\maketitle

\section{Introduction}

Lowness notions are common objects of study in computability theory.  Examples include lowness and superlowness in degree theory, lowness for randomness, lowness for genericity, array computability, jump-traceability, and lowness for isomorphism and lowness for categoricity in computable structure theory.  Each of these notions characterizes a class of reals which are in some way no more useful as an oracle than the empty set.

One way to understand such notions is via tasks and instances: a real $A$ satisfies the lowness notion associated with a task if every instance of the task which has an $A$-computable solution also has a computable solution.  For example, we can consider the original lowness notion: a real $A$ is {\em low} if $A' \le_T \emptyset'$, but by the Schoenfeld Limit Lemma, this can be understood in our framework by saying that an instance is an $X \in 2^\omega$ and the task is to compute a limit approximation to $X$.  Thus, $A$ is low if and only if every $X \in 2^\omega$ which is limit-computable from $A$ is limit-computable from $\emptyset$.

Another well-known example is lowness for Martin-L\"{o}f randomness (see the text by Downey and Hirschfeldt~\cite{dhbook} for background on algorithmic randomness).  Here, an instance is again an $X \in 2^\omega$, and the task is to derandomize $X$, i.e., to capture $X$ with a Martin-L\"of test.  A real $A$ is {\em low for randomness} if every $X \in 2^\omega$ which $A$ can derandomize is already derandomized by $\emptyset$.

In this vein, Franklin and Solomon initiated the study of lowness for isomorphism, where an instance is a pair of computable structures and the task is to compute an isomorphism between the structures \cite{fs-lowim}.
\begin{definition}
A real $A$ is {\em low for isomorphism} if every pair of computable structures with an $A$-computable isomorphism between them have a computable isomorphism between them.
\end{definition}
We refer the reader to the text by Ash and Knight~\cite{ashknight} for background on computable structures. Franklin and Solomon showed that nontrivial examples of such reals exist.  For example, they showed that if $A$ is 2-generic, then $A$ is low for isomorphism \cite{fs-lowim}; an extension of this result was given by Franklin and Turetsky in \cite{ft-1genlow}.

Note that $A$ being low as described by some task does not mean that every $A$-computable solution to an instance of the task is itself a computable solution.  An instance may have multiple solutions, some of which are $A$-computable but not computable.  However, in such a case, the instance will also have computable solutions.  For example, consider two computable copies of $(\QQ, <)$.  It is a simple exercise to show that these two copies have an isomorphism in every Turing degree.  Thus, even if $A$ is low for isomorphism, there will be an $A$-computable isomorphism between these copies which is not a computable isomorphism.  However, there is also a computable isomorphism between these two copies.

We observe that we can understand the collection of isomorphisms between two computable structures as a $\Pi^0_1$-class in Baire space.
\begin{definition}
For computable structures $\M$ and $\N$, define
\[
\iso\M\N = \{ (f_0,f_1) \in \omega^\omega \times \omega^\omega : \M \cong_{f_0} \N \ \& \ f_1 = f_0^{-1}\}.
\]
\end{definition}
In general, the statement that a function is surjective is $\Pi^0_2$, but we avoid that difficulty by including the function's inverse: we can write down a $\Pi^0_1$-formula for $\iso\M\N$ by stating that $f_0$ is an embedding of $\M$ into $\N$ and that $f_1 = f_0^{-1}$.  The existence of the inverse ensures that $f_0$ is surjective.

As any isomorphism from $\M$ to $\N$ computes its inverse, we can understand lowness for isomorphism as the lowness notion in which the instances are $\Pi^0_1$-classes of the form $\iso\M\N$ and the task is to compute an element of the class.  It is then natural to consider the related lowness notion in which an instance is any $\Pi^0_1$-class and the task is to compute an element of the class.  This gives \emph{a priori} two notions, depending on whether one considers $\Pi^0_1$-classes in Baire space or in Cantor space.

\begin{definition}
A real $A$ is {\em \notion \ for Baire space} (or {\em \notion \ for Cantor space}) if every $\Pi^0_1$ class $\P \subseteq \omega^\omega$ (respectively, $\P \subseteq 2^\omega$) with an $A$-computable element has a computable element.
\end{definition}

We prove the following:
\begin{theorem}\label{thm:main}
For $A \in 2^\omega$, the following are equivalent:
\begin{enumerate}
\item $A$ is \notion \ for Baire space;
\item $A$ is \notion \ for Cantor space;
\item $A$ is low for isomorphism.
\end{enumerate}

\end{theorem}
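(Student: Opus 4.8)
The plan is to establish the two straightforward implications $(1)\Rightarrow(2)$ and $(1)\Rightarrow(3)$ directly, to obtain $(3)\Rightarrow(1)$ by a coding construction, and to close the loop with $(2)\Rightarrow(1)$; since $(1)\Leftrightarrow(3)$ and $(1)\Leftrightarrow(2)$ then hold, all three are equivalent. The first two implications are essentially bookkeeping. Every $\Pi^0_1$ class in $2^\omega$ is a $\Pi^0_1$ class in $\omega^\omega$, so $(1)\Rightarrow(2)$ is immediate. For $(1)\Rightarrow(3)$, note that $\iso{\M}{\N}$ is a $\Pi^0_1$ class in $\omega^\omega\times\omega^\omega$, which is computably homeomorphic to $\omega^\omega$: an $A$-computable isomorphism $h$ gives the $A$-computable point $(h,h^{-1})$ of this class, and any computable point $(g_0,g_1)$ gives the computable isomorphism $g_0$.

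The heart of the matter is $(3)\Rightarrow(1)$. Fix a computable tree $T\subseteq\omega^{<\omega}$ with $[T]=\P$ and an $A$-computable $f\in[T]$. I would build computable structures $\M$ and $\N$ with two properties: from $f$ and $T$ one can $A$-compute an isomorphism $\M\cong\N$, and every isomorphism $\M\to\N$ uniformly computes a member of $[T]$. Granting this, lowness for isomorphism yields a computable isomorphism, hence a computable member of $[T]$. The structures would consist of a rigidly tagged ``backbone'' copy of the tree $T$ (the tag on a node $\sigma$ being a rigid finite configuration coding $\sigma$, so no tree automorphism is available), decorated at each node $\sigma\in T$ with a ``marker'' gadget whose $\M$- and $\N$-copies admit exactly two isomorphisms, ``on'' and ``off,'' and with ``constraint'' gadgets tying the marker at $\sigma$ to the markers at its parent and at its children. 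The constraints are to force the set of ``on'' nodes to be a full branch of $T$: the root is on; if a node is on then so is its parent; and if a node is on then exactly one of its children is on. From $f$ one switches on exactly the $\sigma\prec f$; since this is a branch, the resulting assignment extends to an isomorphism, and it is $A$-computable because $f\le_T A$. Conversely, from any isomorphism one reads off its restriction to each marker gadget, recovering the set of ``on'' nodes, which the constraints guarantee is a branch, hence a member of $[T]$.

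I expect the constraint gadgets to be the principal obstacle. Enforcing ``if $\sigma$ is on then so is its parent'' amounts to a finite gadget that is isomorphism-bearing exactly when one is not in the forbidden configuration (parent off, $\sigma$ on), which is routine; the serious point is the ``exactly one child is on'' constraint at a node with infinitely many children, i.e.\ an $\omega$-ary selector forcing precisely one of infinitely many switches on once the parent is on. One must also confirm that dead ends and nodes lying on no infinite path cause no harm: at such a node the selector constraint (eventually) cannot be met, so its marker is forced off, which is exactly what is wanted, and the isomorphism coming from $f$ avoids these nodes, so $\M$ and $\N$ really are isomorphic.

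Finally $(2)\Rightarrow(1)$. The obvious attempt — encode a $\Pi^0_1$ class in $\omega^\omega$ into one in $2^\omega$ via the tally map $g\mapsto 0^{g(0)}10^{g(1)}1\cdots$ — fails, because $\omega^\omega$ is not compact and the image is not closed; its limit points include tally codes of finite nodes followed by $0^\omega$, from which one cannot compute a member of the original class. I would instead argue by a dichotomy on an $A$-computable $f\in[T]$. If $f$ is dominated by some computable $g$, then $f$ lies in the $\Pi^0_1$ class $\{h\in[T]:h\le g\}$, which is (computably homeomorphic to) a $\Pi^0_1$ class in $2^\omega$ and has the $A$-computable member $f$; then $(2)$ supplies a computable member, which lies in $[T]$. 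The remaining case, $f$ not computably dominated, is the other main obstacle: one must show that such an $A$ fails $(2)$ anyway, so that this case is vacuous — presumably by manufacturing from $f$ and $T$ an $A$-computable member of some $\Pi^0_1$ class in $2^\omega$ with no computable member. Together with the $\omega$-ary selector, this is where I expect the real difficulty to lie.
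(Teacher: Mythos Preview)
Your easy implications $(1)\Rightarrow(2)$ and $(1)\Rightarrow(3)$ match the paper. The difference is in how the cycle is closed, and the paper's route sidesteps both obstacles you flagged.

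For the coding, the paper proves $(3)\Rightarrow(2)$ rather than $(3)\Rightarrow(1)$: it builds $\M,\N$ from a \emph{binary} tree $T$ with $[T]=\Q\subseteq 2^\omega$. The universe is $T\times 2$; unary predicates $R_\sigma$ rigidly tag each pair $\{(\sigma,0),(\sigma,1)\}$; a predicate $L$ pins down the $0$-copy at each leaf; constants force the root pair to be swapped; and a single ternary relation, holding of $((\sigma,i_0),(\sigma\cat 0,i_1),(\sigma\cat 1,i_2))$ exactly when $i_0+i_1+i_2$ is even, forces the swapped nodes to form a branch. Because branching is binary, ``if $\sigma$ is on then exactly one child is on'' becomes a parity constraint on three points---no $\omega$-ary selector is needed. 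Your direct $(3)\Rightarrow(1)$ with an infinitary selector may be buildable, but it is harder than necessary once $(2)\Rightarrow(1)$ is available.

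For $(2)\Rightarrow(1)$ your dichotomy has a genuine gap. Case~2, as you phrase it, would prove that every $A$ satisfying $(2)$ is hyperimmune-free, and that is false: $2$-generics are hyperimmune, yet by Franklin--Solomon they are low for isomorphism, hence by the $(3)\Rightarrow(2)$ construction above they satisfy $(2)$. So the hypothesis ``$f$ is not computably dominated'' cannot by itself contradict $(2)$, and it contributes nothing visible toward producing, from $f\in[T]$ with $[T]$ lacking computable members, a Cantor-space $\Pi^0_1$ class with an $f$-computable member and no computable member. The paper instead handles $(2)\Rightarrow(1)$ by quoting a lemma of Simpson: for nonempty $\Pi^0_1$ classes $\P\subseteq\omega^\omega$ and $\Q\subseteq 2^\omega$ there is a $\Pi^0_1$ class $\R\subseteq 2^\omega$ with $\R\equiv_w\P\cup\Q$. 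Taking $\Q$ to have no computable members (e.g., completions of PA), an $A$-computable $f\in\P$ yields an $A$-computable element of $\R$; property $(2)$ gives a computable $Y\in\R$, hence a computable element of $\P\cup\Q$, which must lie in $\P$.
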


$(1) \Rightarrow (2)$ is obvious, as every $\Pi^0_1$-class in Cantor space is itself a $\Pi^0_1$-class in Baire space.  $(1) \Rightarrow (3)$ follows from our discussion of $\iso\M\N$.  We will show $(2) \Rightarrow (1)$ and then $(3) \Rightarrow (2)$. 

The proof of $(2) \Rightarrow (1)$ relies on the following result of Simpson~\cite{Simpson04}:
\begin{lemma}\label{lem:Simpson}
If $\P \subseteq \omega^\omega$ and $\Q \subseteq 2^\omega$ are nonempty $\Pi^0_1$-classes, then there is a $\Pi^0_1$-class $\R \subseteq 2^\omega$ with $\R \equiv_w \P \cup \Q$.
\end{lemma}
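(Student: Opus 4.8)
The plan is to construct $\R$ by hand as a $\Pi^0_1$ subclass of $2^\omega$ whose members serve as ``names'' for elements of $\P \cup \Q$. Fix computable trees $T \subseteq \omega^{<\omega}$ and $S \subseteq 2^{<\omega}$ with $\P = [T]$ and $\Q = [S]$ (both nonempty), together with a computable pairing $\seq{\cdot,\cdot}$. I will regard an $X \in 2^\omega$ as a family of columns, column $n$ being the bits $X\seq{n,0}, X\seq{n,1}, \dots$; within column $n$, the \emph{even part} $(X\seq{n,2m})_m$ is meant to code a value $f(n) \in \omega \cup \{\infty\}$ in unary (its $1$s forming a downward-closed set), and the \emph{odd part} $(X\seq{n,2m+1})_m$ is meant to code a path through $S$. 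The intended reading is: if every $f(n)$ is finite then $X$ names the function $f = (f(n))_n$, which should lie in $[T]$; while a column with $f(n) = \infty$ instead carries a genuine member of $\Q$ as its odd part.

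Concretely, I will let $\R$ be the set of $X \in 2^\omega$ meeting three requirements: (a) every even part is downward closed (all $1$s precede all $0$s); (b) for all $n$ and $k$, if the even part of column $n$ has at least $k$ ones then the string formed by the first $k$ odd bits of column $n$ lies in $S$; and (c) for all $n$ and $k$, either some column $i < n$ has even part with at least $k$ ones, or the tuple $(\min(f(0),k), \dots, \min(f(n-1),k))$ lies in $T$, where each $\min(f(i),k)$ is read off the finitely many bits $X\seq{i,0}, \dots, X\seq{i,2(k-1)}$. For fixed parameters each requirement is a clopen condition, so $\R$ is $\Pi^0_1$, and it is nonempty by the next step.

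I then verify $\R \equiv_w \P \cup \Q$ in two halves. For $\le_w$: from $f \in \P$ I compute the ``honest'' name $X_f$ whose column $n$ has even part $1^{f(n)}0^\omega$ and odd part a computably chosen node of $S$ of length $f(n)$ (such a node exists since $S$ is infinite), and one checks $X_f \in \R$ using $f\uhr n \in T$; from $Y \in \Q$ I compute the name $X_Y$ whose column $0$ has even part $1^\omega$ and odd part $Y$, with all other columns empty, and $X_Y \in \R$ because every initial segment of $Y$ is in $S$. Since every element of $\P \cup \Q$ lies in $\P$ or in $\Q$, this gives $\R \le_w \P \cup \Q$. For $\ge_w$: given $X \in \R$, if no column has infinite even part then every $f(n)$ is finite, $X$ computes $f$, and requirement (c) applied with $k$ exceeding each of $f(0), \dots, f(n-1)$ forces $f\uhr n \in T$ for every $n$, so $f \in \P$; and if some column has infinite even part, let $n_0$ be the least such index, so that $X$ computes the odd part $\rho$ of column $n_0$ simply by reading off the coordinates $\seq{n_0, 2m+1}$, and requirement (b) forces every initial segment of $\rho$ into $S$, whence $\rho \in \Q$.

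The step that needs the most care is the $\ge_w$ direction, where one might worry that the reduction is blocked because there is no uniform way to detect which column (if any) has an infinite even part. This worry is unfounded precisely because $\equiv_w$ is Muchnik reducibility: it only asks that each $X \in \R$ Turing-compute \emph{some} member of $\P \cup \Q$, with the reduction permitted to depend on $X$, and for a given ``stuck'' $X$ the number $n_0$ is fixed, so reading off column $n_0$ is a genuine $X$-computable procedure. The only other delicate point is pinning down requirement (c) exactly — the capping at $k$ is what keeps it clopen while still being met by the honest names $X_f$ — but once the idea is in place this is routine bookkeeping.
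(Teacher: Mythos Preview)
The paper does not give its own proof of this lemma; it is quoted as a result of Simpson~\cite{Simpson04} and used as a black box in the argument for $(2)\Rightarrow(1)$. So there is nothing to compare against on that front.

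That said, your construction is correct and is a reasonable direct proof. A few small points of hygiene: when you build $X_f$ from $f \in \P$, you say the odd part of column $n$ is ``a computably chosen node of $S$ of length $f(n)$,'' which is a finite string, whereas the odd part is an infinite sequence; you should say explicitly that the remaining bits are padded (e.g.\ with zeros), and note that requirement~(b) only constrains the first $f(n)$ odd bits, so the padding is harmless. You should also remark that $S$, being a tree, is closed under prefixes, so that for each $k \le f(n)$ the length-$k$ prefix of your chosen node is still in $S$. Finally, you implicitly use that the empty string lies in both $T$ and $S$ (for the $n=0$ case of~(c) and the $k=0$ case of~(b)); this is immediate since both trees are nonempty, but worth a word. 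With those clarifications the argument goes through: each requirement is, for fixed parameters, a clopen condition depending on finitely many bits of $X$, so $\R$ is $\Pi^0_1$; the maps $f \mapsto X_f$ and $Y \mapsto X_Y$ witness $\R \le_w \P \cup \Q$; and your case split on whether some column has infinite even part correctly yields $\P \cup \Q \le_w \R$, with the nonuniformity in the second case absorbed by the Muchnik (rather than Medvedev) reduction, exactly as you note.
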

Here, $\equiv_w$ is Muchnik equivalence: every element of $\R$ computes an element of $\P \cup \Q$, and every element of $\P \cup \Q$ computes an element of $\R$.

\begin{proof}[Proof of \Cref{thm:main}, $(2) \Rightarrow (1)$]
Suppose $A$ is \notion \ for Cantor space, and let $\P \subseteq \omega^\omega$ be a $\Pi^0_1$-class with an $A$-computable element $f$.  We must show that $\P$ has a computable element.  Fix $Q \subseteq 2^\omega$ a nonempty $\Pi^0_1$-class with no computable elements, e.g., the completions of Peano arithmetic, and let $\R$ be as in \Cref{lem:Simpson}.

As $f \in \P \subseteq \P \cup \Q$ and $\R \equiv_w \P \cup \Q$, there is $X \in \R$ with $X \le_T f \le_T A$.  As $A$ is \notion \ for Cantor space, there must be a computable $Y \in \R$.  Using again $\R \equiv_w \P \cup \Q$, there must be $g \in \P \cup \Q$ with $g \le_T Y$, and so $g$ is computable.  Since $\Q$ contains no computable elements, $g \in \P$ is a computable element as desired.
\end{proof}

As being \notion \ for Baire space is equivalent to being \notion \ for Cantor space, we shall refer to them both as simply {\em \notion}.

Now we turn to the proof of $(3) \Rightarrow (2)$. The following lemma is the heart of this result.

\begin{lemma}\label{lem:primary_construction}
For every $\Pi^0_1$ class $\Q \subseteq 2^\w$, there are computable structures $\M$ and $\N$ such that $\Q \equiv_w \iso\M\N$.
\end{lemma}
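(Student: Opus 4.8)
The plan is to fix a computable tree $T \subseteq 2^{<\w}$ with $[T] = \Q$ and to build computable structures $\M$ and $\N$ (for concreteness, in the language of graphs) whose isomorphisms correspond, uniformly in both directions, to the infinite paths through $T$. The design goal is that every isomorphism $\M \to \N$ is forced to make one ``choice'' at each level $n$, naturally labelled by a string of length $n$ in $T$, with consecutive choices constrained so that the sequence of labels is exactly an increasing chain in $T$; such a chain has union an element of $[T] = \Q$. Since $T$ is computable the construction of $\M$ and $\N$ will be computable, and the degenerate case is handled for free: if $\Q = \emptyset$ then $T$ is finite, no choice can be made past some level, and $\M \not\cong \N$, so $\iso\M\N = \emptyset \equiv_w \emptyset = \Q$.

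Concretely I would give $\M$ and $\N$ a common rigid ``backbone'' that pins down the levels, so that any isomorphism is forced to map the part of $\M$ sitting at level $n$ onto the part of $\N$ at level $n$. To the backbone at level $n$ I would attach a finite ``choice gadget'' admitting exactly $|T \cap 2^{n}|$ isomorphisms onto its counterpart in $\N$, one labelled by each $\sigma \in T$ with $|\sigma| = n$, and I would link the level-$n$ gadget to the level-$(n+1)$ gadget so that an isomorphism respects the links if and only if the level-$n$ label $\sigma$ and the level-$(n+1)$ label $\tau$ satisfy $\sigma \subset \tau$. A node of $T$ with no successors in $T$ then shows up as a gadget choice with no link-consistent continuation, so a total isomorphism can never select it. With this in place the two reductions are routine: given an isomorphism $g$, reading off which of the $|T\cap 2^n|$ maps $g$ induces on the level-$n$ gadget yields, $g$-computably, labels $\sigma_0 \subset \sigma_1 \subset \cdots$ whose union lies in $[T] = \Q$; conversely, given $X \in \Q$, following $X$ and making at each level the unique gadget choice labelled by $X\uhr n$, while interleaving standard back-and-forth bookkeeping so that every element of each structure is eventually matched, produces an isomorphism $X$-computably. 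Hence $\Q \equiv_w \iso\M\N$.

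The step I expect to be the main obstacle is the combinatorial design of the gadgets and their links. The tension is that $\M$ and $\N$ must be genuinely isomorphic when $\Q \neq \emptyset$ --- so the level gadgets and link patterns have to look the same locally in the two structures --- while the set of isomorphisms must be \emph{exactly} the $T$-chains and nothing more, so the gadgets must be rigid enough that an isomorphism cannot do anything at level $n$ beyond picking a $T$-consistent label and cannot tamper with the backbone or blur the levels. Building a finite gadget with a prescribed computable number $|T\cap 2^n|$ of isomorphisms onto its partner, with links whose preservation encodes precisely the restriction to $T$ of the successor relation on $2^{<\w}$, and then checking that no spurious isomorphisms arise, is the heart of the argument; once such gadgets are in hand, the computability of the construction and the verification of the two Muchnik directions are straightforward.
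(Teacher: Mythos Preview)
Your plan is sound and shares the paper's overall strategy --- encode $[T]$ as the set of isomorphisms between two computable structures --- but the paper's implementation is quite different from, and considerably simpler than, the per-level gadget scheme you describe. The paper first normalizes $T$ so that every non-leaf has both children, then takes $\M$ and $\N$ to have universe $T \times 2$ in an infinite relational language: a unary $R_\sigma$ for each $\sigma \in T$ (pinning each pair $\{(\sigma,0),(\sigma,1)\}$ setwise), a unary $L$ marking $(\sigma,0)$ for leaves $\sigma$, a ternary $P$ holding of $((\sigma,i_0),(\sigma\cat 0,i_1),(\sigma\cat 1,i_2))$ exactly when $i_0+i_1+i_2$ is even, and a constant $c$ naming $(\seq{},0)$ in $\M$ but $(\seq{},1)$ in $\N$. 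An isomorphism is then determined by which nodes it ``swaps,'' and the constraints force: swap the root, never swap a leaf, and at each non-leaf swap an even number of $\{\sigma,\sigma\cat 0,\sigma\cat 1\}$. Hence an isomorphism that swaps $\sigma$ must swap exactly one child, and following the swaps traces a path in $[T]$; conversely, given $X \in [T]$, swapping precisely along $X$ and fixing everything else is already a total isomorphism --- no back-and-forth bookkeeping is needed.

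What this buys is that the ``main obstacle'' you correctly identified --- designing finite gadgets with exactly $|T\cap 2^n|$ isomorphisms and the right link behaviour, in the language of graphs, while ruling out spurious global isomorphisms --- simply evaporates. The parity relation $P$ together with the full-branching normalization reduce the choice at each node to a single bit, and the infinite family $\{R_\sigma\}$ does the rigidity work that your backbone-plus-gadget architecture would otherwise have to supply by hand. Your approach would work, but it trades an easy (infinite, relational) language choice for a nontrivial combinatorial design problem; the paper makes the opposite trade.
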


\begin{proof}
Fix a computable tree $T \subseteq 2^{<\w}$ such that $[T] = \Q$ and so that if $\sigma \in T$ is not a leaf, then both $\sigma\cat 0$ and $\sigma\cat 1$ are in $T$.  We can ensure $T$ has this property by replacing it with $\{\sigma, \sigma\cat 0, \sigma\cat 1 : \sigma \in T\}$.  This is a computable tree with this property, and it does not change $[T]$.

The language of our structures will consist of unary relation symbols $R_\sigma$ for $\sigma \in T$, a unary relation symbol $L$, a ternary relation symbol $P$, and a constant symbol $c$.  The universe for both $\M$ and $\N$ will be $T \times 2 = \{ (\sigma, i) : \sigma \in T, i \in \{0, 1\}\}$ (we caution the reader not to confuse $(\sigma, 0)$ with $\sigma\cat 0$). $\M$ and $\N$ will be identical in all ways except for their interpretations of $c$.

In both $\M$ and $\N$, 
\begin{itemize}
\item $R_\sigma( (\tau, i) )$ will hold if and only if $\sigma = \tau$,
\item $L( (\tau, i) )$ will hold if and only if $\tau$ is a leaf of $T$ and $i = 0$, and 
\item $P( (\tau_0, i_0), (\tau_1, i_1), (\tau_2, i_2) )$ will hold if and only if the following conditions are satisfied:
\begin{itemize}
\item $\tau_1 = \tau_0\cat 0$;
\item $\tau_2 = \tau_0\cat 1$; and
\item $i_0 + i_1 + i_2$ is even.
\end{itemize}
\end{itemize}

Finally, in $\M$, $c$ refers to the element $( \seq{}, 0)$, but in $\N$, $c$ refers to the element $( \seq{}, 1)$.

\medskip

Now, suppose $f: \M \to \N$ is an isomorphism.  Because of $R_\sigma$, it must be that 
\[
f\{ (\sigma, 0), (\sigma,1)\} = \{ (\sigma, 0), (\sigma, 1)\}
\]
for every $\sigma$.  We will say that {\em $f$ swaps $\sigma$} if $f( (\sigma, 0) ) = (\sigma, 1)$ (and, thus, if $f( (\sigma, 1) ) = (\sigma, 0)$).

\begin{claim}\label{claim:swapping_count}
Suppose $f: \M \to \N$ has the property that for every $\sigma \in T$, 
\[
f\{ (\sigma, 0), (\sigma,1)\} = \{ (\sigma, 0), (\sigma, 1)\}.
\]
Then $f$ respects $P$ if and only if, for every $\sigma$ not a leaf of $T$, $f$ swaps either 0 or 2 of $\{\sigma, \sigma\cat 0, \sigma\cat 1\}$.
\end{claim}

\begin{proof}
Suppose $f((\tau, 0)) = (\tau, j_\tau)$ for $\tau \in T$.  Then $f$ respects $P$ precisely if, for every $\sigma \in T$ not a leaf, $j_\sigma + j_{\sigma\cat0} + j_{\sigma\cat 1}$ is even.  This will be even if and only if 0 or 2 of the $j$ are 1, which holds if and only if $f$ swaps 0 or 2 of $\{\sigma, \sigma\cat 0, \sigma\cat 1\}$.
\end{proof}

We may make the following observations about an isomorphism $f: \M \to \N$:
\begin{itemize}
\item Because of $c^\M$ and $c^\N$, $f$ must swap $\seq{}$.
\item Because of $L$, $f$ must not swap any leaf $\sigma \in T$.
\item By \Cref{claim:swapping_count}, if $f$ swaps $\sigma$, it must swap exactly one of $\sigma\cat 0$ or $\sigma\cat 1$.
\end{itemize}

So from any isomorphism $f: \M \to \N$, we can recursively compute a $g \in \Q$ by ``following the swaps.''  More precisely:
\begin{enumerate}
\item We define $g\uhr{0} = \seq{}$.
\item If $\sigma = g\uhr{n}$, then inductively we know that $f$ swaps $\sigma$, and thus $\sigma$ is not a leaf of $T$.  Let $i$ be the unique element of $\{0, 1\}$ such that $f$ swaps $\sigma\cat i$.  Define $g\uhr{n+1} = \sigma\cat i$.
\end{enumerate}
This shows that $\Q \le_w \iso\M\N$.

Conversely, suppose $g \in \Q$.  We wish to compute an isomorphism $f: \M \to \N$ from $g$.  We will define $f$ by swapping along $g$ and fixing its values elsewhere.  More precisely:
\begin{enumerate}
\item If $\sigma \prec g$, define $f( (\sigma, i) ) = (\sigma, 1-i)$ for $i < 2$.
\item If $\sigma \not \prec g$, define $f( (\sigma, i) ) = (\sigma, i)$ for $i < 2$.
\end{enumerate}
Clearly, $f$ respects each of the $R_\sigma$.  Since $\seq{} \prec g$, we have that $f(c^\M) = c^\N$.  For any leaf $\sigma \in T$, we know that $\sigma \not \prec g$, so $f( (\sigma, 0) ) = (\sigma, 0)$ and $f$ respects $L$.  For any $\sigma \not \prec g$, $f$ does not swap any of $\{\sigma, \sigma\cat 0, \sigma\cat 1\}$; for any $\sigma \prec g$, $f$ swaps $\sigma$ and precisely one of $\sigma\cat 0$, $\sigma\cat 1$.  Thus, by \Cref{claim:swapping_count}, $f$ respects $P$.  Therefore, $f$ is an isomorphism, and this shows that $\iso\M\N \le_w \Q$.
\end{proof}

Now we can complete the proof of our result.

\begin{proof}[Proof of \Cref{thm:main}, $(3) \Rightarrow (2)$]
Suppose $A$ is low for isomorphism, and let $\P \subseteq 2^\omega$ be a $\Pi^0_1$-class with an $A$-computable element $X$.  We must show that $\P$ has a computable element.  By \Cref{lem:primary_construction}, we can fix computable structures $\M$ and $\N$ with $\P \equiv_w \iso\M\N$.  Then there is $f \in \iso\M\N$ with $f \le_T X \le_T A$.  As $A$ is low for isomorphism, there is a computable $g \in \iso\M\N$.  Then there is $Y \in \P$ with $Y \le_T g$, and $Y$ is the desired computable element.
\end{proof}

This result has a very pleasing corollary. Franklin and McNicholl introduced the notion of lowness for isometry, where an instance is a pair of computable metric spaces and the task is to compute an isometry between the structures \cite{fm-lowisom}.
\begin{definition}
A real $A$ is {\em low for isometry} if every pair of computable structures with an $A$-computable isometry between them have a computable isometry between them.
\end{definition}
We refer the reader to Pour-El and Richards~\cite{prbook} for background on computable metric spaces.  McNicholl and Stull have further studied this in the particular case where the metric spaces are Banach spaces~\cite{ms-isomlp}. Franklin and McNicholl showed that a real is low for isomorphism if and only if it is low for isometry \cite{fm-lowisom}.

We observe that, given two computable metric spaces, one can construct the $\Pi^0_1$-class of all isometries between these two spaces in a way similar to that for the class $\iso\M\N$ for computable structures.  Thus we derive as a corollary one direction of Franklin and McNicholl's result: if $A$ is low for isomorphism, then it is low for isometry.


\begin{thebibliography}{PER89}

\bibitem[AK00]{ashknight}
C.J. Ash and J.~Knight.
\newblock {\em Computable Structures and the Hyperarithmetical Hierarchy}.
\newblock Number 144 in Studies in Logic and the Foundations of Mathematics.
  North-Holland, 2000.

\bibitem[DH10]{dhbook}
Rodney~G. Downey and Denis~R. Hirschfeldt.
\newblock {\em Algorithmic {R}andomness and {C}omplexity}.
\newblock Springer, 2010.

\bibitem[FM]{fm-lowisom}
Johanna~N.Y. Franklin and Timothy~H. McNicholl.
\newblock Degrees of and lowness for isometric isometry.
\newblock In progress.

\bibitem[FS14]{fs-lowim}
Johanna~N.Y. Franklin and Reed Solomon.
\newblock Degrees that are low for isomorphism.
\newblock {\em Computability}, 3(2):73--89, 2014.

\bibitem[FT18]{ft-1genlow}
Johanna~N.Y. Franklin and Dan Turetsky.
\newblock Lowness for isomorphism and degrees of genericity.
\newblock {\em Computability}, 7(1):1--6, 2018.

\bibitem[MS]{ms-isomlp}
Timothy~H.\ McNicholl and D.\~M.\ Stull.
\newblock The isometry degree of a computable copy of $\ell^p$.
\newblock Submitted.

\bibitem[PER89]{prbook}
Marian~B. Pour-El and J.~Ian Richards.
\newblock {\em Computability in analysis and physics}.
\newblock Perspectives in Mathematical Logic. Springer-Verlag, Berlin, 1989.

\bibitem[Sim07]{Simpson04}
Stephen~G. Simpson.
\newblock An extension of the recursively enumerable {T}uring degrees.
\newblock {\em J. Lond. Math. Soc. (2)}, 75(2):287--297, 2007.

\end{thebibliography}
\end{document}